\newcounter{lemma}[section]
\newcounter{corollary}[section]
\newcounter{remark}[section]
\newcounter{theorem}[section]
\newcounter{proposition}[section]
\numberwithin{equation}{section}
\begin{document}

\markboth{\centerline{E. SEVOST'YANOV}} {\centerline{ON THE
OPENNESS...}}

\def\cc{\setcounter{equation}{0}
\setcounter{figure}{0}\setcounter{table}{0}}

\overfullrule=0pt


\author{{E. SEVOST'YANOV}\\}

\title{
{\bf ON OPENNESS AND DISCRETENESS OF MAPPINGS SATISFYING ONE
INEQUALITY WITH RESPECT TO $p$-MODULUS}}

\date{\today}
\maketitle

\begin{abstract}
A paper is devoted to study of topological properties of some class
of space mappings. It is showed that, sense preserving mappings
$f:D\rightarrow \overline{{\Bbb R}^n}$ of a domain $D\subset{\Bbb
R}^n,$ $n\geqslant 2,$ satisfying some modulus inequality with
respect to $p$-modulus of families of curves, are open and discrete
at some restrictions on a function $Q,$ which determinate inequality
mentioned above.
\end{abstract}

\bigskip
{\bf 2010 Mathematics Subject Classification: Primary 30C65;
Secondary 30C62}

\section{Preliminaries}

The present paper is devoted to the study of quasiconformal mappings
and their generalizations, such as mappings with finite distortion
intensively investigated last time, see 
\cite{BGMV}, \cite{BGR}, \cite{Cr$_1$}--\cite{Cr$_2$},
\cite{Gol$_1$}--\cite{Gol$_2$}, \cite{IM}, \cite{IR},
 \cite{KO}, 
\cite{MRSY$_1$}--\cite{MRSY$_2$}, \cite{Mikl}, \cite{Pol},
\cite{Re$_1$}--\cite{Re$_2$}, \cite{Ri}, 
\cite{RSY$_1$}--\cite{RSY$_2$}, \cite{UV},
\cite{Va$_1$}--\cite{Va$_2$}.

Let us give some definitions. Everywhere below, $D$ is a domain in
${\Bbb R}^n,$ $n\ge 2,$ $m$ is the Lebesgue measure in ${\Bbb R}^n,$
$m(A)$ the Lebesgue measure of a measurable set $A\subset {\Bbb
R}^n.$  A mapping $f:D\rightarrow {\Bbb R}^n$ is {\it discrete} if
$f^{-1}(y)$ consists of isolated points for each $y\in{\Bbb R}^n,$
and $f$ is {\it open} if it maps open sets onto open sets. The
notation $f:D\rightarrow {\Bbb R}^n$ assumes that $f$ is continuous.
A mapping $f$ is said to be {\it orientation preserving,} if the
topological index $µ(y, f,G)>0$ for an arbitrary domain $G\subset D$
such that $\overline{G}\subset D$ and $y\in f(G)\setminus f(\partial
G),$ see e.g. \cite[II.2]{Re$_2$}. Given a mapping $f:D\rightarrow
{\Bbb R}^n,$ a set $E\subset D$ and a point $y\in {\Bbb R}^n,$ we
define the multiplicity function $N(y, f, E)$ as the number of
pre-images of $y$ in $E,$ i.e., $$N(y, f, E) = {\rm card}\,\left\{x
\in E: f(x) = y\right\}\,$$ and
$$N(f,E)=\sup\limits_{y\in{\Bbb
R}^n}\,N(y,f,E)\,.$$
A set $H\subset \overline{{\Bbb R}^n}$ is called {\it totally
disconnected}, if every it's component degenerate to a point; in
this case we write ${\rm dim\,}H=0,$ where ${\rm dim}$ denotes {\it
a topological dimension} of $H$ (see \cite{HW}). A mapping
$f:D\rightarrow \overline{{\Bbb R}^n}$ is said to be {\it light}, if
${\rm dim\,}\{f^{\,-1}(y)\}=0$ for every $y\in \overline{{\Bbb
R}^n}.$ Set
$$B(x_0, r)=\left\{x\in{\Bbb R}^n: |x-x_0|< r\right\},\quad {\Bbb
B}^n := B(0, 1)\,, \quad {\Bbb S}^{n-1}:=S(0, 1)\,,$$
$\Omega_n$ is a volume of the unit ball ${\Bbb B}^n$ in ${\Bbb
R}^n,$ and $\omega_{n-1}$ is an area of the unit sphere ${\Bbb
S}^{n-1}$ in ${\Bbb R}^n.$

\medskip
A curve $\gamma$ in ${\Bbb R}^n$ is a continuous mapping $\gamma
:\Delta\rightarrow{\Bbb R}^n$ where $\Delta$ is an interval in
${\Bbb R} .$ Its locus $\gamma(\Delta)$ is denoted by $|\gamma|.$
Given a family $\Gamma$ of curves $\gamma$ in ${\Bbb R}^n ,$ a Borel
function $\rho:{\Bbb R}^n \rightarrow [0,\infty]$ is called {\it
admissible} for $\Gamma ,$ abbr. $\rho \in {\rm adm}\, \Gamma ,$ if
$$\int\limits_{\gamma} \rho(x)|dx| \ge 1$$ for each (locally
rectifiable) $\gamma\in\Gamma.$ Given $p\ge 1,$ the {\it
$p$--mo\-du\-lus} of $\Gamma$ is defined as the quantity
$$M_p(\Gamma):=\inf\limits_{ \rho \in {\rm adm}\, \Gamma}
\int\limits_{{\Bbb R}^n} \rho^p(x) dm(x)$$ interpreted as $+\infty$
if ${\rm adm}\, \Gamma = \varnothing .$ Note that
$M_p(\varnothing)=0;$ $M_p(\Gamma_1)\le M_p(\Gamma_2)$ whenever
$\Gamma_1\subset\Gamma_2,$ and
$M_p\left(\bigcup\limits_{i=1}^{\infty}\Gamma_i\right)\le
\sum\limits_{i=1}^{\infty}M_p(\Gamma_i),$ see
\cite[Theorem~6.2]{Va$_1$}.

\medskip
Denote $\Gamma(E,F,D)$ a family of all paths
$\gamma:[a,b]\rightarrow \overline{{\Bbb R}^n},$ which join $E$ and
$F$ in $D,$ i.e., $\gamma(a)\in E,\gamma(b)\in\,F$ and $\gamma(t)\in
D$ при $t \in (a, b).$

\medskip
A following fact was established in our recent paper \cite{Sev$_1$}.
Let $f$ be a mapping of a domain $D\subset{\Bbb R}^n,$ $n\geqslant
2,$ into ${\Bbb R}^n$ obeying a condition
\begin{equation} \label{eq2*B}
M(\Gamma )\leqslant \int\limits_{f(D)} Q(y)\cdot \rho_*^n (y) dm(y)
\end{equation}
for every $\rho_*\in {\rm adm}\,f(\Gamma)$ with respect to a
conformal modulus $M(\Gamma):=M_n(\Gamma)$ and a given function
$Q:{\Bbb R}^n\rightarrow [0, \infty],$ $Q(x)\equiv 0$ for all $x\in
{\Bbb R}^n\setminus f(D).$ Then $f$ is open and discrete whenever
$Q$ satisfies some conditions. Given $y_0\in f(D)$ and numbers
$0<r_1<r_2<\infty,$ we denote
\begin{equation}\label{eq1**}
A(r_1,r_2,y_0)=\left\{ y\,\in\,{\Bbb R}^n:
r_1<|y-y_0|<r_2\right\}\,.\end{equation}
A goal of the present paper is to prove an analogous result in the
situation, when $n-1<p\leqslant n.$ Namely, given $y_0\in f(D)$ and
$0<r_1<r_2<\infty,$ denote $\Gamma(y_0, r_1, r_2)$ a family of paths
in $D$ such that $f(\Gamma)\in \Gamma(S(y_0, r_1), S(y_0, r_2),
A(r_1,r_2,y_0)).$ Instead of (\ref{eq2*B}), assume that $f$
satisfies the inequality
\begin{equation} \label{eq2*A}
M_p(\Gamma(y_0, r_1, r_2))\leqslant \int\limits_{f(D)} Q(y)\cdot
\eta^p (y) dm(y)
\end{equation}
for some $p\in (n-1, n],$ every $y_0\in f(D),$ every
$0<r_1<r_2<\infty,$ and every nonnegative Lebesgue measurable
function $\eta: (r_1,r_2)\rightarrow [0,\infty ]$ with
\begin{equation}\label{eqA2}
\int\limits_{r_1}^{r_2}\eta(r) dr\geqslant 1\,.
\end{equation}
Observe that the inequality (\ref{eq2*A}) is more weaker than
(\ref{eq2*B}) even at $p=n.$ In fact, let $\rho_*\in {\rm
adm}\,f(\Gamma),$ and assume that the relation (\ref{eq2*B}) holds.
Let $\eta: (r_1,r_2)\rightarrow [0,\infty ]$ be a Lebesgue
measurable function which satisfies (\ref{eqA2}). Set
$\rho_*(y):=\eta(|y-y_0|),$ $A=A(r_1,r_2,y_0),$ $S_1=S(y_0, r_1)$
and $S_2=S(y_0, r_2).$ Observe that $\rho_*\in\Gamma(S_1, S_2, A)$
since
$\int\limits_{\gamma}\,\rho_{*}(y)\,|dy|\,\geqslant\int\limits_{r_1}
^{r_2}\,\eta(t)dt\geqslant 1$ for every $\gamma\in \Gamma(S_1, S_2,
A)$ by \cite[теорема~5.7]{Va$_1$}.
Consequently, we can substitute $\rho_*$ in (\ref{eq2*B}), whence we
obtain (\ref{eq2*A}).

\medskip
The present paper is devoted to investigation of the following
question:

\medskip
{\it What kind of connection between the discreteness of $f$ and the
estimate (\ref{eq2*A}) at some $n-1<p\leqslant n$ ?}

\medskip
An answer is given bellow. As we noted above, a case $p=n$ is
studied in \cite{Sev$_1$} earlier .

Let $Q:D\rightarrow [0,\infty]$ be a Lebesgue measurable function,
then  $q_{x_0}(r)$ denotes an integral average of $Q(x)$ under the
sphere $S(x_0, r),$
\begin{equation}\label{eq32*}
q_{x_0}(r):=\frac{1}{\omega_{n-1}r^{n-1}}\int\limits_{|x-x_0|=r}Q(x)\,dS\,,
\end{equation}
where $dS$ -- denotes an element of area of $S.$ We say that a
function ${\varphi}:D\rightarrow{\Bbb R}$ has {\it a finite mean
oscillation } at a point $x_0\in D$, write $\varphi\in FMO(x_0),$ if
%
$$\overline{\lim\limits_{\varepsilon\rightarrow 0}}\ \
\frac{1}{\Omega_n\varepsilon^n} \ \ \int\limits_{B(
x_0,\,\varepsilon)}
|{\varphi}(x)-\overline{{\varphi}}_{\varepsilon}|\ dm(x)\, <\,
\infty,
$$
%
where
%
$\overline{{\varphi}}_{\varepsilon}\,=\,\frac{1}{\Omega_n\varepsilon^n}\int\limits_{B(
x_0,\,\varepsilon)} {\varphi}(x)\ dm(x)$ (see e.g.
\cite[section~6.1]{MRSY$_2$}). A main result of the present paper is
the following.

\medskip
\begin{theorem}\label{th3}{\sl\,Let $Q:{\Bbb R}^n\rightarrow (0,
\infty)$ be a Lebesgue measurable function equal to 0 outside of
$f(D),$ and let $f:D\,\rightarrow\,{\Bbb R}^n$ be a sense-preserving
mapping obeying (\ref{eq2*A}) for every $y_0\in f(D),$ every
$0<r_1<r_2<\infty,$ some $p\in (n-1, n]$ and every nonnegative
Lebesgue measurable function $\eta: (r_1,r_2)\rightarrow [0,\infty
]$ obeying (\ref{eqA2}). Then $f$ is discrete and open whenever the
function $Q$ satisfies at least one of the following conditions:

1) $Q\in FMO(y_0)$ for every $y_0\in f(D),$

2) $q_{y_0}(r)\,=\,O\left(\left[\log{\frac1r}\right]^{n-1}\right)$
as $r\rightarrow 0$ and every $y_0\in f(D),$ where a function
$q_{y_0}(r)$ is defined by (\ref{eq32*}),

3) for every $y_0\in f(D)$ there exists $\delta(y_0)>0$ such that
for sufficiently small $\varepsilon>0$
\begin{equation}\label{eq5**}
\int\limits_{\varepsilon}^{\delta(y_0)}\frac{dt}{t^{\frac{n-1}{p-1}}q_{y_0}^{\frac{1}{p-1}}(t)}<\infty,
\qquad
\int\limits_{0}^{\delta(y_0)}\frac{dt}{t^{\frac{n-1}{p-1}}q_{y_0}^{\frac{1}{p-1}}(t)}=\infty\,.
\end{equation}
}
\end{theorem}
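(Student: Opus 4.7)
My overall strategy is to reduce the statement to showing that $f$ is light, i.e.\ $\mathrm{dim}\,f^{-1}(y)=0$ for every $y\in\mathbb{R}^n$. Indeed, by a classical topological result (see, e.g., \cite[II.6.5]{Re$_2$}), every sense-preserving continuous light mapping $f:D\to\mathbb{R}^n$ is automatically open and discrete. I therefore argue by contradiction: assume there exists $y_0\in f(D)$ whose preimage $f^{-1}(y_0)$ has a non-degenerate connected component $K$, and fix $x_0\in K$.

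The first step is to bound $M_p(\Gamma(y_0,r_1,r_2))$ from above by plugging into the hypothesis (\ref{eq2*A}) the extremal test function
\[
\eta_{r_1,r_2}(r)\;:=\;\frac{1}{I(r_1,r_2)\cdot r^{(n-1)/(p-1)}\,q_{y_0}^{1/(p-1)}(r)}\,,\qquad I(r_1,r_2)\;:=\;\int\limits_{r_1}^{r_2}\frac{dt}{t^{(n-1)/(p-1)}\,q_{y_0}^{1/(p-1)}(t)}\,,
\]
so that (\ref{eqA2}) holds with equality. A direct spherical-coordinate computation of $\int_{A(r_1,r_2,y_0)}Q(y)\,\eta_{r_1,r_2}^{\,p}(|y-y_0|)\,dm(y)$ then yields $M_p(\Gamma(y_0,r_1,r_2))\le\omega_{n-1}/I(r_1,r_2)^{\,p-1}$. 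I next verify that each of the hypotheses (1)--(3) forces $I(r_1,r_2)\to\infty$ as $r_1\to 0$ for some fixed $r_2=\delta(y_0)$: case (3) is literally the definition; case (2) produces an integrand bounded below by a constant multiple of $(r\log(1/r))^{-(n-1)/(p-1)}$ with exponent $(n-1)/(p-1)\ge 1$ (since $p\le n$), whose integral at $0$ diverges; case (1) reduces to (3) by the standard lemma for FMO functions (cf.\ \cite{MRSY$_2$}). Consequently $M_p(\Gamma(y_0,r_1,r_2))\to 0$ as $r_1\to 0$.

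The second step is to establish a lower bound $M_p(\Gamma(y_0,r_1,r_2))\ge c_0>0$ uniformly in $r_1\in(0,r_2)$. Pick any $x_1\in K\setminus\{x_0\}$ and set $r_0=|x_1-x_0|/2$, so that $K_0:=K\cap\overline{B(x_0,r_0)}$ is a continuum of diameter $\ge r_0$ containing $x_0$ and reaching $\partial B(x_0,r_0)$, on which $f\equiv y_0$. Since sense-preserving mappings are never locally constant, $M:=\max_{z\in\overline{B(x_0,r_0)}}|f(z)-y_0|>0$; taking $0<r_2<M$ and tracking $t\mapsto|f(\sigma(t))-y_0|$ along a straight segment $\sigma$ from $x_0$ to a maximiser $z^*$, I extract a non-degenerate subarc $F_{r_2}\subset\overline{B(x_0,r_0)}$ with $|f-y_0|\ge r_2$. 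Any continuous path in $\overline{B(x_0,r_0)}$ joining $K_0$ to $F_{r_2}$ has, via the intermediate value theorem applied to $|f-y_0|$, a subpath lying in $\Gamma(y_0,r_1,r_2)$ for every $r_1\in(0,r_2)$. The minorisation principle for $p$-modulus then gives
\[
M_p(\Gamma(y_0,r_1,r_2))\;\ge\;M_p(\Gamma(K_0,F_{r_2},B(x_0,r_0)))\;=:\;c_0\,,
\]
and $c_0>0$ because for $p\in(n-1,n]$ any two non-degenerate continua in a Euclidean ball admit connecting curves of strictly positive $p$-modulus. Combined with the upper bound this produces the desired contradiction $c_0\le\omega_{n-1}/I(r_1,r_2)^{p-1}\to 0$.

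The main obstacle is the strict positivity $c_0>0$ in the lower bound. It hinges critically on the threshold $p>n-1$: at or below this value a continuum can have zero $p$-capacity and the whole scheme collapses, whereas above it a standard capacity estimate---based on the fact that a non-degenerate continuum has Hausdorff dimension at least $1>n-p$---secures $c_0>0$. Everything else (the spherical-coordinate computation, and the reduction of (1) and (2) to the divergence of $I$) is routine, modulo invoking well-known FMO-type integral inequalities.
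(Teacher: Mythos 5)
Your overall architecture coincides with the paper's: reduce to lightness via the Titus--Young theorem for sense-preserving light mappings, assume a nondegenerate continuum in a fibre, derive an upper bound for the $p$-modulus of a connecting family from (\ref{eq2*A}) with a normalized test function, and contradict a positive lower bound coming from the fact that two nondegenerate continua in a ball are joined by a family of positive $p$-modulus for $p\in(n-1,n]$ (the paper's Proposition~\ref{pr1}, via Adamowicz--Shanmugalingam for $p<n$ and N\"akki for $p=n$). Your lower bound is organized differently --- a single pair of continua $K_0$, $F_{r_2}$ with a direct minorization, instead of the paper's countable decomposition $\Gamma\left(C,\overline{B(x_0,\delta_0)},{\Bbb B}^n\right)=\bigcup_i\Gamma_i^*$ and subadditivity --- and your computation $\int_{A}Q\,\eta^p\,dm=\omega_{n-1}/I^{p-1}$ for the weight $\psi(t)=t^{-(n-1)/(p-1)}q_{y_0}^{-1/(p-1)}(t)$ is correct and matches the paper's treatment of case 3). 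Note only that your justification of $c_0>0$ by ``Hausdorff dimension $\geqslant 1>n-p$'' is a heuristic for positivity of $p$-capacity of a single continuum; what is actually needed is a Loewner-type estimate for a condenser with two continuum plates, which is exactly the content of Proposition~\ref{pr1} and is the nontrivial external input for $p\in(n-1,n)$.

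The genuine gap is in case 1). You claim that $Q\in FMO(y_0)$ ``reduces to (3) by the standard lemma for FMO functions.'' That is not what the standard lemma says: \cite[Corollary~6.3]{MRSY$_2$} gives
$\int_{\varepsilon<|y-y_0|<\varepsilon_0}Q(y)\left(|y-y_0|\log\tfrac{1}{|y-y_0|}\right)^{-n}dm(y)=O\left(\log\log\tfrac{1}{\varepsilon}\right)$,
an estimate on a weighted integral of $Q$, not a statement about the sphere averages $q_{y_0}(r)$. FMO controls ball averages (which grow at most like $\log\frac1\varepsilon$) but gives no pointwise control of $q_{y_0}(r)$, so the finiteness half of (\ref{eq5**}), $\int_{\varepsilon}^{\delta}t^{-\frac{n-1}{p-1}}q_{y_0}^{-\frac{1}{p-1}}(t)\,dt<\infty$, can simply fail for an FMO function (nothing prevents $q_{y_0}$ from vanishing arbitrarily fast on a set of radii of positive measure, making your weight non-integrable and $I=\infty$). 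The divergence half would require a separate H\"older-type argument and is in any case not ``the standard lemma.'' The paper avoids all of this by treating case 1) with a different weight, $\psi(t)=\left(t\log\tfrac1t\right)^{-n/p}$, for which the numerator in (\ref{eq4!}) is $O\left(\log\log\tfrac{1}{\varepsilon}\right)$ by the FMO estimate while $I(\varepsilon,\varepsilon_0)\geqslant\log\frac{\log\frac{1}{\varepsilon}}{\log\frac{1}{\varepsilon_0}}$, so the ratio is $O\left(\left(\log\log\tfrac{1}{\varepsilon}\right)^{1-p}\right)\rightarrow 0$. Your framework accommodates this fix (replace the weight in case 1) and rerun your upper bound), but as written the FMO case is not proved.
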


\medskip
\begin{remark}\label{rem3}
Theorem \ref{th3} holds for mappings
$f:D\,\rightarrow\,\overline{{\Bbb R}^n},$ also. In this case, we
need require the conditions 1)--3) at $y_0=0$ for
$\widetilde{f}=f\circ\varphi,$ where $\varphi(x)=\frac{x}{|x|^2},$
$\varphi:\infty\mapsto 0.$
\end{remark}

\medskip
\section{Main lemma}
\setcounter{equation}{0}

{\it A continuum} is called a connected compactum $C\subset
\overline{{\Bbb R}^n}.$ We say that a family of paths $\Gamma_1$
{\it is minorized} by a family $\Gamma_2,$ write $\Gamma_1 >
\Gamma_2,$
if for every $\gamma \in \Gamma_1$ there exists a subpath which
belongs to $\Gamma_2.$
In this case, $M_p(\Gamma_1) \leqslant M_p(\Gamma_2)$ (see
\cite[Theorem~6.4]{Va$_1$}).

\medskip
Now we need some information from a theory of general metric spaces.
Let $(X, \mu)$ be a metric space with measure $\mu.$ For each real
number $n\ge 1,$ we define {\it the Loewner function} $\phi_n:(0,
\infty)\rightarrow [0, \infty)$ on $X$ as
$$\phi_n(t)=\inf\{M_n(\Gamma(E, F, X)): \Delta(E, F)\leqslant t\}\,,$$
where $E$ and $F$ are disjoint nondegenerate continua in $X$ with
$$\Delta(E, F):=\frac{{\rm dist}\,(E,
F)}{\min\{{\rm diam\,}E, {\rm diam\,}F\}}\,.$$
A pathwise connected metric measure space $(X, \mu)$ is said to be a
{\it Loewner space} of exponent $n,$ or an $n$-Loewner space, if the
Loewner function $\phi_n(t)$ is positive for all $t> 0$ (see
\cite[section~2.5]{MRSY$_2$} or \cite[Ch.~8]{He}). Observe that
${\Bbb R}^n$ and ${\Bbb B}^n\subset {\Bbb R}^n$ are Loewner spaces
(see \cite[Theorem~8.2 and Example~8.24(a)]{He}). As known, a
condition $\mu(B(x_0, r))\geqslant C\cdot r^n$ holds in Loewner
spaces $X$ for every point $x_0\in X,$ for some constant $C$ and all
$r<{\rm diam}\,X.$ A space $X$ is called {\it geodesic}, if every
pair of points in $X$ can be joined by a curve whose length is equal
to the distance between the points. In particular, ${\Bbb B}^n$ is a
geodesic space.  A following definition can be found in
\cite[section~1.4, ch.~I]{He} or \cite[section~1]{AS}. A measure
$\mu$ in a metric space is called doubling if balls have finite and
positive measure and there is a constant $C(\mu)\geqslant 1$ such
that $\mu(B(x_0, 2r))\le C\cdot \mu(B(x_0, r))$ for every $r>0$ and
every $x_0\in X.$ We also call a metric measure space $(X, \mu)$
{\it doubling} if $\mu$ is a doubling measure. Following
\cite[section~7.22]{He}, given a real-valued function $u$ in a
metric space $X,$ a Borel function $\rho\colon X\rightarrow [0,
\infty]$ is said to be an upper gradient of $u$ if
$|u(x)-u(y)|\leqslant \int\limits_{\gamma}\rho\,|dx|$ for each
rectifiable curve $\gamma$ joining $x$ and $y$ in $X.$ Let $(X,
\mu)$ be a metric measure space and let $1\leqslant p<\infty.$ We
say that $X$ admits {\it a $(1; p)$-Poincare inequality} if there is
constant $C\geqslant 1$ so that
$$\frac{1}{\mu(B)}\int\limits_{B}|u-u_B|d\mu(x)\leqslant C\cdot({\rm
diam\,}B)\left(\frac{1}{\mu(\tau B)} \int\limits_{\tau B}\rho^n
d\mu(x)\right)^{1/n}$$
for all balls $B$ in $X,$ for all bounded continuous functions $u$
on $B,$ and for all upper gradients $\rho$ of $u.$ Metric measure
spaces where formula
$$\frac{1}{C}R^{n}\leqslant \mu(B(x_0,
R))\leqslant CR^{n}$$
holds for every $x_0\in X,$ some constant $C\geqslant 1$ and all
$R<{\rm diam}\,X$ are called {\it Ahlfors $n$-regular.}

\medskip
A following statement holds.

\medskip
\begin{proposition}\label{pr1}
The unit ball ${\Bbb B}^n$ is Ahlfors $n$-regular metric space, in
which $(1; n)$-Poincare inequality holds. Moreover, the estimate
\begin{equation}\label{eq3}
M_p(\Gamma(E, F, {\Bbb B}^n))>0
\end{equation}
holds for any continua $E, F\subset {\Bbb B}^n$ and every $p\in
(n-1, n].$
\end{proposition}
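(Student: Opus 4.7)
The plan is to verify the three assertions---Ahlfors $n$-regularity, the $(1,n)$-Poincar\'e inequality, and positivity of the $p$-modulus of the path family joining continua---in sequence, since the first two feed into the endpoint case $p=n$ of the third via the theorem of Heinonen and Koskela. Throughout, ``continua'' is taken to mean nondegenerate continua; single points give a vanishing $p$-modulus already at $p=n$ and the statement would fail otherwise.

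For Ahlfors $n$-regularity of ${\Bbb B}^n$, the upper bound $m(B(x_0,R)\cap{\Bbb B}^n)\le\Omega_nR^n$ is immediate from $B(x_0,R)\cap{\Bbb B}^n\subset B(x_0,R)$. For the matching lower bound, the convexity of ${\Bbb B}^n$ yields a corkscrew: given $x_0\in{\Bbb B}^n$ and $0<R<2$, a shift of $x_0$ toward the origin by an amount calibrated to $\min(R/2,1-|x_0|)$ produces $y\in{\Bbb B}^n$ with $B(y,cR)\subset B(x_0,R)\cap{\Bbb B}^n$ for a dimensional $c>0$. The $(1,n)$-Poincar\'e inequality on ${\Bbb B}^n$ is then standard for bounded Lipschitz domains: it follows, for instance, by extending Sobolev functions on ${\Bbb B}^n$ to a slightly larger ball in ${\Bbb R}^n$ and invoking the Euclidean $(1,n)$-Poincar\'e inequality there, or one cites \cite[Ch.~9]{He} directly.

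For the modulus estimate (\ref{eq3}), the case $p=n$ now follows from the previous two assertions combined with the theorem of Heinonen and Koskela (see \cite[Ch.~8]{He}): an Ahlfors $n$-regular metric measure space admitting a $(1,n)$-Poincar\'e inequality is $n$-Loewner, so $M_n(\Gamma(E,F,{\Bbb B}^n))\ge\phi_n(\Delta(E,F))>0$ for any pair of nondegenerate continua $E,F$. For the range $p\in(n-1,n)$ the Loewner machinery is unavailable, and the plan is to translate the modulus bound into a $p$-capacity bound on the condenser $(E,F;{\Bbb B}^n)$ through the modulus--capacity identity of Hesse/Ziemer, and then apply the classical Maz'ya-type estimate that any compact set of positive $1$-dimensional Hausdorff measure has strictly positive $p$-capacity whenever $p>n-1$. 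This hypothesis is satisfied because a nondegenerate continuum $E$ satisfies ${\cal H}^1(E)\ge\mathrm{diam}\,E>0$.

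The main obstacle is precisely the subrange $p\in(n-1,n)$: the threshold $p=n-1$ is sharp, since even a smooth $C^1$ arc has vanishing $p$-capacity for $p\le n-1$, so the exponent restriction in the statement cannot be relaxed and a genuine appeal to Hausdorff-dimensional capacity theory is required. The softer Loewner-style argument available at $p=n$ degrades for smaller $p$ because the underlying Poincar\'e estimates no longer produce a one-sided modulus bound in the direction we need.
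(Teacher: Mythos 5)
Your argument is correct, and for the first two assertions it runs parallel to the paper's (the paper simply appeals to the discussion preceding the proposition and to \cite[Theorems~9.8 and 9.5]{He} for the Poincar\'e inequality, where you verify Ahlfors regularity by hand via a corkscrew argument and get the Poincar\'e inequality by extension). The genuine divergence is in the proof of (\ref{eq3}): the paper disposes of the whole range $p\in(n-1,n]$ in one stroke by citing \cite[Corollary~4.8]{AS} --- the non-conformal Loewner-type estimate of Adamowicz--Shanmugalingam, which is exactly what the Ahlfors $n$-regularity and $(1;n)$-Poincar\'e hypotheses are assembled for --- whereas you split off the endpoint $p=n$ (Heinonen--Koskela: regular plus Poincar\'e implies Loewner) and treat $p\in(n-1,n)$ by classical Euclidean potential theory, namely the Hesse/Ziemer identity $M_p(\Gamma(E,F,{\Bbb B}^n))={\rm cap}_p(E,F;{\Bbb B}^n)$ combined with the fact that a set of Hausdorff dimension at least $1>n-p$ has positive $p$-capacity. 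Your route is more self-contained in the Euclidean setting and explains where the threshold $p=n-1$ comes from; the paper's is shorter and stays inside the metric-measure-space framework. Two small points you should make explicit: (i) the Hesse/Ziemer reduction needs $E$ and $F$ disjoint (if they meet, the modulus is $+\infty$ and there is nothing to prove), and (ii) the Maz'ya-type estimate gives positivity of the $p$-capacity of each plate separately, so you still need the standard (but not entirely free) step that a condenser both of whose plates have positive $p$-capacity has positive condenser capacity. Your observation that the continua must be nondegenerate is correct and consistent with how the proposition is used in the proof of Lemma \ref{lem1}, where it is applied to a nondegenerate continuum and a closed ball.
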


\begin{proof}
By comments given above, the unit ball ${\Bbb B}^n$ is Ahlfors
$n$-regular, moreover, a space ${\Bbb B}^n$ is geodesic and is a
Loewner space. By \cite[Theorems~9.8 and 9.5]{He},  $(1;
n)$-Poincare inequality holds in  ${\Bbb B}^n.$ In this case,
(\ref{eq3}) holds by \cite[Corollary~4.8]{AS}.
\end{proof}$\Box$

\medskip Let $A(\varepsilon,
\varepsilon_0, y_0)$ be defined by (\ref{eq1**}) at
$r_1=\varepsilon$ and $r_2=\varepsilon_0.$ A following lemma
includes the main result of the present paper in the most general
situation.

\medskip
\begin{lemma}\label{lem1}
{\sl\,Let $Q:{\Bbb R}^n\rightarrow (0, \infty)$ be a Lebesgue
measurable function equal to 0 outside of $f(D),$ and let
$f:D\,\rightarrow\,{\Bbb R}^n$ be a sense-preserving mapping obeying
(\ref{eq2*A}) for every $y_0\in f(D),$ every $0<r_1<r_2<\infty,$
some $p\in (n-1, n]$ and every nonnegative Lebesgue measurable
function $\eta: (r_1,r_2)\rightarrow [0,\infty ]$ obeying
(\ref{eqA2}).

If, for every $y_0\in D$ and some $\varepsilon_0>0,$
\begin{equation} \label{eq4!}
\int\limits_{A(\varepsilon, \varepsilon_0,
y_0)}Q(y)\cdot\psi^p(|y-y_0|) \ dm(y)\,=\,o\left(I^p(\varepsilon,
\varepsilon_0)\right)
\end{equation}
where $\psi(t):(0,\infty)\rightarrow [0,\infty]$ is some nonnegative
Lebesgue measurable function such that
\begin{equation} \label{eq5}
0< I(\varepsilon,
\varepsilon_0):=\int\limits_{\varepsilon}^{\varepsilon_0}\psi(t)dt <
\infty\qquad \forall \varepsilon\in(0, \varepsilon_0)\,,
\end{equation}
then $f$ is open and discrete.}
\end{lemma}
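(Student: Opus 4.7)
The plan is to reduce to showing that $f$ is \emph{light} (i.e., has totally disconnected fibers); a classical theorem asserts that a sense-preserving light mapping between domains in ${\Bbb R}^n,$ $n\ge 2,$ is automatically open and discrete, so this reduction suffices. I would then argue by contradiction: assume that $f^{\,-1}(y_0)$ contains a nondegenerate continuum $C\subset D$ for some $y_0\in f(D).$

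Using continuity of $f,$ connectedness of $D,$ and the fact that $f,$ being sense-preserving, cannot be constant, I would construct a ball $B_0\subset D$ with $\overline{B_0}\subset D$ and two disjoint nondegenerate continua $K_1\subset C\cap B_0$ and $K_2\subset B_0$ such that $f(K_2)\cap\overline{B(y_0,\varepsilon_0')}=\varnothing$ for some $0<\varepsilon_0'\le\varepsilon_0.$ For any $\varepsilon\in(0,\varepsilon_0')$ and any $\gamma\in\Gamma(K_1,K_2,B_0),$ the image $f\circ\gamma$ begins at $y_0$ and exits $\overline{B(y_0,\varepsilon_0')},$ so extracting an appropriate subarc yields a path whose $f$-image lies in $\Gamma(S(y_0,\varepsilon),S(y_0,\varepsilon_0'),A(\varepsilon,\varepsilon_0',y_0)).$ Hence $\Gamma(K_1,K_2,B_0)>\Gamma(y_0,\varepsilon,\varepsilon_0'),$ and the minorization principle gives
$$M_p(\Gamma(K_1,K_2,B_0))\,\le\,M_p(\Gamma(y_0,\varepsilon,\varepsilon_0')).$$

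Next I would bound the two sides separately. After rescaling $B_0$ to the unit ball (which multiplies $M_p$ by a positive factor), Proposition~\ref{pr1} provides a strictly positive lower bound $M_p(\Gamma(K_1,K_2,B_0))\ge c_0$ that does not depend on $\varepsilon.$ For the upper bound I would substitute the test function $\eta(t):=\psi(t)/I(\varepsilon,\varepsilon_0'),$ which satisfies (\ref{eqA2}), into the hypothesis (\ref{eq2*A}), obtaining
$$M_p(\Gamma(y_0,\varepsilon,\varepsilon_0'))\,\le\,\frac{1}{I(\varepsilon,\varepsilon_0')^p}\int\limits_{A(\varepsilon,\varepsilon_0',y_0)}Q(y)\,\psi^p(|y-y_0|)\,dm(y).$$
Hypothesis (\ref{eq4!}) (trivially transferred from $\varepsilon_0$ to $\varepsilon_0'\le\varepsilon_0,$ since the asymptotic behaviour is driven by $\varepsilon\to 0^+$) forces the right-hand side to tend to $0$ as $\varepsilon\to 0^+.$ Comparing with the lower bound yields $0<c_0\le 0,$ the desired contradiction.

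The hardest step will be the topological construction of $B_0,K_1,K_2$: one must enclose a subcontinuum of $C$ together with a point whose $f$-image is bounded away from $y_0$ inside a common ball $B_0\subset D,$ and then refine this data into two disjoint nondegenerate continua with the required image property. This uses only continuity, connectedness of $D,$ and the nonconstancy consequence of sense-preservation, but the selection of the component, the ball radius, and the auxiliary $\varepsilon_0'$ must be coordinated carefully. Once this is in place, the subpath extraction, the modulus minorization, the lower bound from Proposition~\ref{pr1} (which is precisely where the regime $p>n-1$ is exploited), and the upper bound via the test function $\eta=\psi/I$ are all routine.
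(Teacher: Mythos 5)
Your overall strategy coincides with the paper's: reduce to lightness via the Titus--Young theorem, suppose a fiber contains a nondegenerate continuum $C$, and derive a contradiction by squeezing a $p$-modulus between a positive lower bound (Proposition~\ref{pr1}, resp.\ N\"akki's lemma at $p=n$) and an upper bound obtained from (\ref{eq2*A}) with the test function $\eta=\psi/I$. One place where you genuinely diverge, to your advantage: the paper decomposes $\Gamma\left(C,\overline{B(x_0,\delta_0)},{\Bbb B}^n\right)=\bigcup_i\Gamma_i^*$ according to how far the image paths travel from $y_0$ and invokes countable subadditivity, whereas you fix a single outer radius $\varepsilon_0'$ once and for all and use one minorization. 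Your shortcut is legitimate precisely because the image of your second continuum is compact and omits $y_0$, hence lies at positive distance from it --- the same observation would in fact let the paper dispense with its decomposition.

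The genuine gap is in the step you yourself flag as hardest: the construction of a \emph{common} ball $B_0\subset D$ containing both a nondegenerate subcontinuum $K_1$ of $C$ and a continuum $K_2$ with $f(K_2)$ bounded away from $y_0$. You claim this follows from continuity, connectedness of $D$, and ``the nonconstancy consequence of sense-preservation,'' but sense-preservation (in the degree-theoretic sense used here) only yields $f\not\equiv y_0$ on all of $D$; it does not prevent $f$ from being identically $y_0$ on an entire neighbourhood of $C$. In that case every ball $B_0\subset D$ small enough to sit inside $D$ and meet $C$ may contain no point at which $f\ne y_0$, and no admissible $K_2$ exists; enlarging $B_0$ is not an option in a general domain. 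The paper sidesteps this by decoupling the two continua: it picks $\overline{B(x_0,\delta_0)}$ with $f\ne y_0$ \emph{anywhere} in the domain (normalized to ${\Bbb B}^n$) and applies the positivity of $M_p\left(\Gamma\left(C,\overline{B(x_0,\delta_0)},{\Bbb B}^n\right)\right)$ for two arbitrary continua in the unit ball; you should do the same rather than insist on a common small ball. A second, minor point: the ``trivial transfer'' of (\ref{eq4!}) from $\varepsilon_0$ to $\varepsilon_0'$ is not automatic, since it multiplies the bound by $\left(I(\varepsilon,\varepsilon_0)/I(\varepsilon,\varepsilon_0')\right)^p$; you need the observation (Remark~\ref{rem1} in the paper, resting on $Q>0$ a.e.\ and the monotonicity of the left side of (\ref{eq4!})) that $\int_{\varepsilon}^{\varepsilon_0'}\psi(t)\,dt\rightarrow\infty$ as $\varepsilon\rightarrow 0$, which keeps that ratio bounded. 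With these two repairs your argument closes and is essentially the paper's proof.
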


\medskip
\begin{remark}\label{rem1}
In the conditions of Lemma \ref{lem1}, we can consider that
$\int\limits_{\varepsilon}^{A}\,\psi(t) dt>0$ for every
$\varepsilon\in (0, A)$ and some fixed $A$ with $0<A<\varepsilon_0.$
In fact, the integral (\ref{eq4!}) increase at decreasing of
$\varepsilon.$ Now, since $Q(x)>0$ a.e., we obtain from (\ref{eq4!})
and (\ref{eq5}) that $\int\limits_{\varepsilon}^{A}\,\psi(t)
dt\rightarrow \infty$ as $\varepsilon\rightarrow 0.$
\end{remark}

\medskip
{\it A proof of Lemma \ref{lem1}.} Without loss of generality, we
can consider that $D={\Bbb B}^n.$ Since every light sense-preserving
mapping $f:D\rightarrow {\Bbb R}^n$ is open and discrete in $D,$ see
e.g., \cite[Corollary, p.~333]{TY}, it is sufficiently to prove that
$f$ is light. Suppose a contrary.   Then there exists $y_0\in {\Bbb
R}^n$ such that a set $\{f^{\,-1}(y_0)\}$ is not totally
disconnected. Now, there exists a continuum $C\subset
\{f^{\,-1}(y_0)\}.$ Since $f$ is a sense-preserving, $f\not\equiv
y_0.$ By theorem on a preserving of a sign, there exists $x_0\in D$
and $\delta_0>0:$ $\overline{B(x_0, \delta_0)}\subset D$ and
\begin{equation}\label{eq7*}
f(x)\ne y_0\qquad 
\forall\quad x\in \overline{B(x_0, \delta_0)}\,.
\end{equation}
By \cite[Lemma~1.15]{Na} at $p=n,$ and Proposition \ref{pr1} at
$p\in (n-1, n),$
\begin{equation}\label{eq4*}
M_p\left(\Gamma\left(C, \overline{B(x_0, \delta_0)}, {\Bbb
B}^n\right)\right)>0\,.
\end{equation}
By (\ref{eq7*}), since $f(C)=\{y_0\},$ every path of
$\Delta=f\left(\Gamma\left(C, \overline{B(x_0, \delta_0)}, {\Bbb
B}^n\right)\right)$ does not degenerate to a point. From other hand,
an endpoint of every path of $\Delta$ is $y_0.$ Let $\Gamma_i$ be a
family of paths $\alpha_i(t):(0,1)\rightarrow {\Bbb R}^n$ such that
$\alpha_i(1)\in S(y_0, r_i),$ $r_i<\varepsilon_0,$ $r_i$ is some
strictly positive sequence with $r_i\rightarrow 0$ as $i\rightarrow
\infty,$ and $\alpha_i(t)\rightarrow y_0$ as $t\rightarrow 0.$ Now
\begin{equation}\label{eq12*}
\Gamma\left(C, \overline{B(x_0, \delta_0)}, {\Bbb B}^n\right) =
\bigcup\limits_{i=1}^\infty\,\, \Gamma_i^*\,,
\end{equation}
where $\Gamma_i^*$ is a subfamily of all paths $\gamma$ from
$\Gamma\left(C, \overline{B(x_0, \delta_0)}, {\Bbb B}^n\right),$
such that $f(\gamma)$ has a subpath in $\Gamma_i.$ Observe that
\begin{equation}\label{eq8*}
\Gamma_i^*>\Gamma(\varepsilon, r_i, y_0)
\end{equation}
for every $\varepsilon\in (0, r_i).$
Set
$$\eta_{i,\varepsilon}(t)=\left\{
\begin{array}{rr}
\psi(t)/I(\varepsilon, r_i), & t\in (\varepsilon, r_i)\,,\\
0,  &  t\not\in (\varepsilon, r_i)\,,
\end{array}
\right. $$
where $I(\varepsilon, r_i)=\int\limits_{\varepsilon}^{r_i}\,\psi (t)
dt.$ Observe that
$\int\limits_{\varepsilon}^{r_i}\eta_{i,\varepsilon}(t)dt=1.$ Now we
can apply (\ref{eq2*A}). By (\ref{eq2*A}) and (\ref{eq8*}),
\begin{equation}\label{eq11*}
M_p(\Gamma_i^*)\leqslant M_p(\Gamma(r_i, \varepsilon, y_0))\leqslant
\int\limits_{A(\varepsilon, \varepsilon_0, y_0)} Q(y)\cdot
\eta^p_{i,\varepsilon}(|y-y_0|)dm(y)\,\leqslant {\frak
F}_i(\varepsilon),
\end{equation}
where
${\frak F}_i(\varepsilon)=\,\frac{1}{{I(\varepsilon,
r_i)}^p}\int\limits_{A(\varepsilon, \varepsilon_0,
y_0)}\,Q(y)\,\psi^{p}(|y-y_0|)\,dm(y)$ и $I(\varepsilon,
r_i)=\int\limits_{\varepsilon}^{r_i}\,\psi (t) dt.$ By (\ref{eq4!}),
$$\int\limits_{A(\varepsilon, \varepsilon_0, y_0)}\,Q(y)\,\psi^{p}(|y-y_0|)\,dm(y)\,=\,
G(\varepsilon)\cdot\left(\int\limits_{\varepsilon}^{\varepsilon_0}\,\psi
(t) dt\right)^p\,,$$
where $G(\varepsilon)\rightarrow 0$ as $\varepsilon\rightarrow 0$ by
assumption of Lemma. Observe that
${\frak F}_i(\varepsilon)\,=\,G(\varepsilon)\cdot\left(1\,+\,\frac{
\int\limits_{r_i}^{\varepsilon_0}\,\psi(t)
dt}{\int\limits_{\varepsilon}^{r_i}\,\psi(t) dt}\right)^p,$
where $\int\limits_{r_i}^{\varepsilon_0}\,\psi(t) dt<\infty$ is a
fixed real number, and $\int\limits_{\varepsilon}^{r_i}\,\psi(t)
dt\rightarrow \infty$ as $\varepsilon\rightarrow 0,$ because left
hand-part of (\ref{eq4!}) increase by a decreasing of $\varepsilon.$
Thus, ${\frak F}_i(\varepsilon)\rightarrow 0.$ Taking a limit in
(\ref{eq11*}) as $\varepsilon\rightarrow 0,$ left hand-part of which
does not depend on $\varepsilon,$ we obtain that $M_p(\Gamma_i^*)=0$
for all positive integer $i.$ However, in this case, by
(\ref{eq12*}) and from that,
$M_p\left(\bigcup\limits_{i=1}^{\infty}\Gamma_i\right)\leqslant
\sum\limits_{i=1}^{\infty}M_p(\Gamma_i)$
(\cite[Theorem~6.2]{Va$_1$}), we obtain: $M_p\left(\Gamma\left(C,
\overline{B(x_0, \delta_0)}, {\Bbb B}^n\right)\right) =0.$ The last
contradicts to (\ref{eq4*}). The contradiction obtained above proves
that $f$ is light. Consequently, by \cite[Coroollary, p.~333]{TY},
$f$ is open and discrete. $\Box$

\section{Proofs of main results} Now we show that {\it a proof of Theorem \ref{th3}}
follows from Lemma \ref{lem1}. In the case 1), when $Q\in FMO(y_0),$
consider a function $\psi(t)=\left(t\,\log{\frac1t}\right)^{-n/p}>0$
for which we apply Lemma \ref{lem1}. By \cite[Corollary~6.3,
Ch.~6]{MRSY$_2$}, we obtain
\begin{equation}\label{eq31*}
\int\limits_{\varepsilon<|y-y_0|<\varepsilon_0}Q(y)\cdot\psi^p(|y-y_0|)
\ dm(y)\,=\,O \left(\log\log \frac{1}{\varepsilon}\right)\,,\quad
\varepsilon\rightarrow 0\,.
\end{equation}
for $\varepsilon<\varepsilon_0$ and some $\varepsilon_0>0.$ For
$I(\varepsilon, \varepsilon_0),$ defined in Lemma \ref{lem1}, we
have
\begin{equation}\label{eqlogest}
I(\varepsilon,
\varepsilon_0)=\int\limits_{\varepsilon}^{\varepsilon_0}\psi(t) dt
>\log{\frac{\log{\frac{1}
{\varepsilon}}}{\log{\frac{1}{\varepsilon_0}}}}.
\end{equation}
Now, by (\ref{eq31*}),
$$ \frac{1}{I^p(\varepsilon,
\varepsilon_0)}\int\limits_{\varepsilon<|y-y_0|<\varepsilon_0}
Q(x)\cdot\psi^p(|y-y_0|)dm(y)\leqslant
C\left(\log\log\frac{1}{\varepsilon}\right)^{1-p}\rightarrow 0,
\quad \varepsilon\rightarrow 0\,,
$$
that yields a desired conclusion in a case 1), because
(\ref{eq4!})--(\ref{eq5}) are satisfied. Observe that a case 2) is a
consequence of 3), thus, we can restrict us by consideration of a
case 3). In this case, set
\begin{equation}\label{eq9}
I=I(\varepsilon,\varepsilon_0)=\int\limits_{\varepsilon}^{\varepsilon_0}\
\frac{dr}{r^{\frac{n-1}{p-1}}q_{y_0}^{\frac{1}{p-1}}(r)}\,.
\end{equation}
Given $0<\varepsilon<\varepsilon_0<1,$ set
\begin{equation}\label{eq1*****}
\psi(t)= \left \{\begin{array}{rr}
1/[t^{\frac{n-1}{p-1}}q_{y_0}^{\frac{1}{p-1}}(t)]\ , & \ t\in
(\varepsilon,\varepsilon_0)\ ,
\\ 0\ ,  &  \ t\notin (\varepsilon,\varepsilon_0)\ .
\end{array} \right.
\end{equation}
Observe that $\psi$ satisfies all of the conditions of Lemma
\ref{lem1}.
By Fubini Theorem (\cite[Theorem~8.1, Ch.~III]{Sa}),
$\int\limits_{\varepsilon<|y-y_0|<\varepsilon_0}
Q(y)\cdot\psi^p(|y-y_0|)\, dm(y)=\omega_{n-1}\cdot I(\varepsilon,
\varepsilon_0)$ (where $\omega_{n-1}$ is an area of the unit sphere
of ${\Bbb S}^{n-1}$ in ${\Bbb R}^n$). We conclude that
(\ref{eq4!})--(\ref{eq5}) hold, that complete the proof. $\Box$

\section{Corollaries} The following statements can be obtained from Lemma \ref{lem1}.

\medskip
\begin{corollary}\label{cor1}
{\sl\,Let $Q:{\Bbb R}^n\rightarrow (0, \infty)$ be a Lebesgue
measurable function equal to 0 outside of $f(D),$ and let
$f:D\,\rightarrow\,{\Bbb R}^n$ be a sense-preserving mapping obeying
(\ref{eq2*A}) for every $y_0\in f(D),$ every $0<r_1<r_2<\infty,$
some $p\in (n-1, n]$ and every nonnegative Lebesgue measurable
function $\eta: (r_1,r_2)\rightarrow [0,\infty ]$ obeying
(\ref{eqA2}). Then $f$ is discrete and open whenever the function
$Q$ satisfies the following conditions:

1)
$\int\limits_{\varepsilon}^{\delta_0}\frac{dt}{tq_{y_0}^{\frac{1}{n-1}}(t)}<\infty$
for every $y_0\in f(D),$ some $\delta_0=\delta_0(y_0)$ and small
enough $\varepsilon>0,$ and

2)
$\int\limits_{0}^{\delta_0}\frac{dt}{tq_{y_0}^{\frac{1}{n-1}}(t)}=\infty$
for every $y_0\in f(D)$ and some $\delta_0=\delta_0(y_0).$ }
\end{corollary}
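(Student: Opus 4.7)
The plan is to deduce Corollary \ref{cor1} from Lemma \ref{lem1} by choosing the test function
\[
\psi(t)\;:=\;\frac{1}{t\,q_{y_0}^{1/(n-1)}(t)}, \qquad t\in(0,\delta_0),
\]
and $\psi\equiv 0$ outside $(0,\delta_0)$, where $\delta_0=\delta_0(y_0)$ is as in the hypotheses. With this choice, hypothesis 1) guarantees that $I(\varepsilon,\delta_0):=\int_\varepsilon^{\delta_0}\psi(t)\,dt<\infty$ for all sufficiently small $\varepsilon>0$, while hypothesis 2) ensures $I(\varepsilon,\delta_0)\to\infty$ as $\varepsilon\to 0^+$; since $Q>0$ forces $q_{y_0}(t)$ to be finite a.e.\ on $(0,\delta_0)$, one also has $\psi>0$ a.e.\ and $I(\varepsilon,\delta_0)>0$. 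Hence condition (\ref{eq5}) is satisfied with $\varepsilon_0=\delta_0$, and the task reduces to verifying the asymptotic inequality (\ref{eq4!}).

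For that step I would compute the weighted integral in spherical coordinates via Fubini's theorem, obtaining
\[
\int\limits_{A(\varepsilon,\varepsilon_0,y_0)} Q(y)\,\psi^p(|y-y_0|)\,dm(y) \;=\; \omega_{n-1}\int\limits_\varepsilon^{\varepsilon_0} t^{n-1}q_{y_0}(t)\,\psi^p(t)\,dt.
\]
The algebraic key is that by the very definition of $\psi$ one has $t^{n-1}q_{y_0}(t)=\psi(t)^{-(n-1)}$, so the integrand collapses to $\psi^{p-n+1}(t)$. Setting $\alpha:=p-n+1$, the hypothesis $p\in(n-1,n]$ gives $\alpha\in(0,1]$, and H\"older's inequality with conjugate exponents $1/\alpha$ and $1/(1-\alpha)$ (the case $\alpha=1$ being trivial) produces
\[
\int\limits_\varepsilon^{\varepsilon_0}\psi^{\alpha}(t)\,dt \;\le\; \varepsilon_0^{\,1-\alpha}\,I(\varepsilon,\varepsilon_0)^{\alpha} \;=\; \varepsilon_0^{\,n-p}\,I(\varepsilon,\varepsilon_0)^{\,p-n+1}.
\]

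Combining the two bounds gives
\[
\frac{1}{I(\varepsilon,\varepsilon_0)^{\,p}}\int\limits_{A(\varepsilon,\varepsilon_0,y_0)} Q(y)\,\psi^p(|y-y_0|)\,dm(y) \;\le\; \frac{\omega_{n-1}\,\varepsilon_0^{\,n-p}}{I(\varepsilon,\varepsilon_0)^{\,n-1}} \;\longrightarrow\; 0
\]
as $\varepsilon\to 0^+$, since $n-1>0$ and $I(\varepsilon,\varepsilon_0)\to\infty$ by hypothesis 2). Thus (\ref{eq4!}) holds, and Lemma \ref{lem1} directly yields that $f$ is open and discrete. The only genuinely subtle point of the plan is spotting that the exponent $1/(n-1)$ appearing in the corollary --- rather than the $1/(p-1)$ from case 3) of Theorem \ref{th3} --- is precisely what makes $t^{n-1}q_{y_0}(t)$ equal to a negative power of $\psi$; this reduces the Fubini integrand to $\psi^{p-n+1}$ with sub-linear exponent, after which H\"older's inequality takes over routinely and no further obstacle arises.
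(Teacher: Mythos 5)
Your proof is correct and follows the same overall strategy as the paper: both deduce the corollary from Lemma \ref{lem1} by taking $\psi$ to be a power of $g(t)=1/\bigl(t\,q_{y_0}^{1/(n-1)}(t)\bigr)$ and checking (\ref{eq4!})--(\ref{eq5}) via the Fubini/polar-coordinate identity. The only difference is the normalization: the paper sets $\psi=g^{n/p}$, which makes the Fubini integral equal to $\omega_{n-1}\int_\varepsilon^{\varepsilon_0}g\,dt$ (the quantity controlled by hypotheses 1) and 2)) but leaves $I=\int g^{n/p}$ as a different object that still has to be compared to $\int g$ by H\"older when $p<n$; you instead set $\psi=g$, so that $I$ coincides with the hypothesis integral and the H\"older step appears in bounding $\int_\varepsilon^{\varepsilon_0}\psi^{\,p-n+1}\,dt\le \varepsilon_0^{\,n-p}I^{\,p-n+1}$. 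Your version is arguably cleaner, and it makes explicit the sub-linear-exponent/H\"older step that the paper hides behind the phrase ``arguing similarly with the proof of case 3)'' --- note that, unlike in case 3), the integrand here does not collapse to $\psi$ itself, so that extra step is genuinely needed. The one loose point is your parenthetical claim that $Q>0$ forces $q_{y_0}$ to be finite a.e.\ (positivity of $Q$ gives no upper bound on $q_{y_0}$); the positivity of $I(\varepsilon,\varepsilon_0)$ for small $\varepsilon$ should instead be extracted from hypothesis 2) as in Remark \ref{rem1}, which is the same level of care the paper itself takes.
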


\medskip\begin{proof}
Set  $\psi(t)\quad=\quad \left \{\begin{array}{rr}
\left(1/[tq^{\frac{1}{n-1}}_{y_0}(t)]\right)^{n/p}\ , & \ t\in
(\varepsilon, \varepsilon_0)\ ,
\\ 0\ ,  &  \ t\notin (\varepsilon,
\varepsilon_0)\ ,
\end{array} \right.$ Arguing similarly with the proof of a case 3) of the Theorem
\ref{th3}, we obtain a desired conclusion.
\end{proof}$\Box$

\medskip
Let now $p\in (n-1, n).$

\medskip
\begin{corollary}\label{cor3}
{\sl\, Let $Q:{\Bbb R}^n\rightarrow (0, \infty)$ be a Lebesgue
measurable function equal to 0 outside of $f(D),$ and let
$f:D\,\rightarrow\,{\Bbb R}^n$ be a sense-preserving mapping obeying
(\ref{eq2*A}) for every $y_0\in f(D),$ every $0<r_1<r_2<\infty,$
some $p\in (n-1, n]$ and every nonnegative Lebesgue measurable
function $\eta: (r_1,r_2)\rightarrow [0,\infty ]$ obeying
(\ref{eqA2}). Then $f$ is discrete and open whenever the function
$Q$ satisfies the condition $Q\in L_{loc}^s({\Bbb R}^n)$ at some
$s\geqslant\frac{n}{n-p}.$ }
\end{corollary}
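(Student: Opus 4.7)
The plan is to apply Lemma \ref{lem1} with the simple choice $\psi(t)=1/t$ for $t\in(0,\varepsilon_0)$ and $\psi(t)=0$ otherwise, where $\varepsilon_0=\varepsilon_0(y_0)>0$ is taken small enough that $\overline{B(y_0,\varepsilon_0)}$ lies in an open set on which $Q\in L^s$ (possible by the hypothesis $Q\in L^s_{\rm loc}({\Bbb R}^n)$). With this choice, $I(\varepsilon,\varepsilon_0)=\log(\varepsilon_0/\varepsilon)$, so condition (\ref{eq5}) is trivially satisfied and, in particular, $I(\varepsilon,\varepsilon_0)\to\infty$ as $\varepsilon\to 0$.

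The verification of (\ref{eq4!}) will be carried out by H\"older's inequality with exponents $s$ and $s':=s/(s-1)$. Writing $A=A(\varepsilon,\varepsilon_0,y_0)$ and passing to spherical coordinates,
\begin{equation*}
\int\limits_{A} Q(y)\,|y-y_0|^{-p}\,dm(y)\;\leqslant\;\left(\int\limits_{A} Q^s\,dm\right)^{1/s}\!\left(\omega_{n-1}\int\limits_{\varepsilon}^{\varepsilon_0}r^{n-1-ps'}\,dr\right)^{1/s'}\!.
\end{equation*}
The first factor is bounded uniformly in $\varepsilon$ by $\|Q\|_{L^s(B(y_0,\varepsilon_0))}$. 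For the second, a short algebraic check shows that the hypothesis $s\geqslant n/(n-p)$ is equivalent to $ps'\leqslant n$, so the radial integral is either bounded (when $ps'<n$) or equal to $\log(\varepsilon_0/\varepsilon)$ (when $ps'=n$).

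In the subcritical case $s>n/(n-p)$ we have $ps'<n$, hence $\int_A Q\psi^p\,dm$ stays bounded while $I^p=\log^p(\varepsilon_0/\varepsilon)\to\infty$, and (\ref{eq4!}) is immediate. The main obstacle is the critical case $s=n/(n-p)$: here both the H\"older bound and $I^p$ are powers of $\log(\varepsilon_0/\varepsilon)$, and one must check that the H\"older exponent $1/s'=p/n$ yields a strictly smaller power than $p$. Indeed
\begin{equation*}
\frac{1}{I^p(\varepsilon,\varepsilon_0)}\int\limits_{A} Q(y)\,\psi^p(|y-y_0|)\,dm(y)\;\leqslant\;C\cdot\log^{\,p/n-p}\!\left(\varepsilon_0/\varepsilon\right)\;=\;C\cdot\log^{-p(n-1)/n}\!\left(\varepsilon_0/\varepsilon\right),
\end{equation*}
which tends to $0$ as $\varepsilon\to 0$ precisely because $p(n-1)/n>0$ (using $n\geqslant 2$ and $p>n-1$). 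Thus (\ref{eq4!}) holds in both ranges, and Lemma \ref{lem1} yields openness and discreteness of $f$.
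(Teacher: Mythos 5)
Your proof is correct and follows essentially the same route as the paper: the same choice $\psi(t)=1/t$, H\"older's inequality with conjugate exponents $s$ and $s'=s/(s-1)$ (the paper writes $q'$ and $q$), and the same two-case split into the critical exponent $s=n/(n-p)$, where the radial integral produces an extra $\log^{p/n}$ that is beaten by $I^p=\log^p$, and the subcritical range $s>n/(n-p)$, where the radial integral stays bounded. No gaps; your explicit verification that $s\geqslant n/(n-p)$ is equivalent to $ps'\leqslant n$ is a useful detail the paper leaves implicit.
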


\medskip
\begin{proof}
Fix $0<\varepsilon_0<\infty.$ Given $y_0\in f(D),$ set $G:=B(y_0,
\varepsilon_0)$ and $\psi(t):=1/t.$ Observe that $\psi$ satisfies
(\ref{eq5}). It remains to verify (\ref{eq4!}) now. Applying
H\"{o}lder inequality, we obtain
\begin{equation}\label{eq13}
\int\limits_{\varepsilon<|x-b|<\varepsilon_0}\frac{Q(x)}{|x-b|^p} \
dm(x)\leqslant
\left(\int\limits_{\varepsilon<|x-b|<\varepsilon_0}\frac{1}{|x-b|^{pq}}
\ dm(x) \right)^{\frac{1}{q}}\,\left(\int\limits_{G}
Q^{q^{\prime}}(x)\ dm(x)\right)^{\frac{1}{q^{\prime}}}\,,
\end{equation}
where  $1/q+1/q^{\prime}=1$. Observe that the integral in the left
hand-part of (\ref{eq13}) can be directly calculated. In fact, let
$q^{\prime}=\frac{n}{n-p}$ (and, consequently, $q=\frac{n}{p}.$) By
Fubini theorem,
$$
\int\limits_{\varepsilon<|x-b|<\varepsilon_0}\frac{1}{|x-b|^{pq}} \
dm(x)=\omega_{n-1}\int\limits_{\varepsilon}^{\varepsilon_0}
\frac{dt}{t}=\omega_{n-1}\log\frac{\varepsilon_0}{\varepsilon}\,.
$$
Following to notions made in Lemma \ref{lem1}, we obtain that
$$
\frac{1}{I^p(\varepsilon,
\varepsilon_0)}\int\limits_{\varepsilon<|x-b|<\varepsilon_0}\frac{Q(x)}{|x-b|^p}
\ dm(x)\leqslant \omega^{\frac{p}{n}}_{n-1}\Vert
Q\Vert_{L^{\frac{n}{n-p}}(G)}\left(\log\frac{\varepsilon_0}{\varepsilon}\right)
^{-p+\frac{p}{n}}\,\rightarrow 0\,,
$$
as $\varepsilon\rightarrow 0,$ which implies (\ref{eq4!}).

\medskip
Now, let $q^{\prime}>\frac{n}{n-p}$ (
$q=\frac{q^{\prime}}{q^{\prime}-1}$). In this case,
$$
\int\limits_{\varepsilon<|x-b|<\varepsilon_0}\frac{1}{|x-b|^{pq}} \
dm(x) = \omega_{n-1}\int\limits_{\varepsilon}^{\varepsilon_0}
t^{n-\frac{pq^{\prime}}{q^{\prime}-1}-1}dt\leqslant
\omega_{n-1}\int\limits_{0}^{\varepsilon_0}
t^{n-\frac{pq^{\prime}}{q^{\prime}-1}-1}dt
=\frac{\omega_{n-1}}{n-\frac{pq^{\prime}}{q^{\prime}-1}}\varepsilon^{n-\frac{pq^{\prime}}{q^{\prime}-1}}_0,
$$
and, consequently,
$$
\frac{1}{I^p(\varepsilon, \varepsilon_0)}
\int\limits_{\varepsilon<|x-b|<\varepsilon_0}\frac{Q(x)}{|x-b|^p} \
dm(x)\leqslant \Vert
Q\Vert_{L^{q^{\prime}}(G)}\left(\frac{\omega_{n-1}}{n-\frac{pq^{\prime}}{q^{\prime}-1}}
\varepsilon^{n-\frac{pq^{\prime}}{q^{\prime}-1}}_0\right)^{\frac{1}{q}}\left(\log\frac{\varepsilon_0}{\varepsilon}\right)^{-p}\,,
$$
which implies (\ref{eq4!}). Now a desired conclusion follows from
Lemma \ref{lem1}.
\end{proof}$\Box$

\section{Examples}\setcounter{equation}{0}

First of all, let us give some examples of mappings obeying
(\ref{eq2*B}) and (\ref{eq2*A}). It is known that, for an arbitrary
quasiregular mapping $f:D \rightarrow {\Bbb R}^n,$ one has
$$M(\Gamma)\le N(f, A)K_O(f)M(f(\Gamma))$$
for some constant $K_O(f)\geqslant 1,$ for an arbitrary Borel set
$A$ in the domain $D$ such that $N(f, A)<\infty$ and an arbitrary
family $\Gamma$ of curves $\gamma$ in $A$ (see
\cite[Theorem~3.2]{MRV$_1$} or \cite[Theorem~6.7, Chap.~II]{Ri}).

\medskip
Observe that $N(y,f,A)$ is Lebesgue measurable for any Borel
measurable set $A$ (see \cite[Theorem of section~IV.1.2]{RR}).

Now, to give some another examples.  Set at points $x\in D$ of
differentiability of $f$
$$\Vert
f^{\,\prime}(x)\Vert\,=\,\max\limits_{h\in {\Bbb R}^n \backslash
\{0\}} \frac {|f^{\,\prime}(x)h|}{|h|}\,, J(x,f)=\det
f^{\,\prime}(x),$$
and define for any $x\in D$ and $p\geqslant 1$
$$K_{O, p}(x,f)\quad =\quad \left\{
\begin{array}{rr}
\frac{\Vert f^\prime(x)\Vert^p}{|J(x,f)|}, & J(x,f)\ne 0,\\
1,  &  f^{\,\prime}(x)=0, \\
\infty, & {\rm otherwise}
\end{array}
\right.\,.$$
We say that a property $P$ holds for {\it $p$-almost every
($p$-a.e.)} curves $\gamma$ in a family $\Gamma$ if the subfamily of
all curves in $\Gamma $, for which $P$ fails, has $p$-mo\-du\-lus
zero.
Recall that a mapping $f:D\rightarrow {\Bbb R}^n$ is said to have
{\it $N$-pro\-per\-ty (by Luzin)} if
$m\left(f\left(S\right)\right)=0$ whenever $m(S)=0$ for
$S\subset{\Bbb R}^n.$ Similarly, $f$ has the {\it
$N^{-1}$-pro\-per\-ty} if $m\left(f^{\,-1}(S)\right)=0$ whenever
$m(S)=0.$

If $\gamma :\Delta\rightarrow{\Bbb R}^n$ is a locally rectifiable
curve, then there is the unique nondecreasing length function
$l_{\gamma}$ of $\Delta$ onto a length interval $\Delta
_{\gamma}\subset{\Bbb R}$ with a prescribed normalization $l
_{\gamma}(t_0)=0\in\Delta _{\gamma},$ $t_0\in\Delta,$ such that $l
_{\gamma}(t)$ is equal to the length of the subcurve $\gamma
|_{[t_0,t]}$ of $\gamma$ if $t>t_0,$ $t\in\Delta ,$ and $l
_{\gamma}(t)$ is equal to minus length of $\gamma |_{[t,t_0]}$ if
$t<t_0,$ $t\in\Delta .$ Let $g: |\gamma |\rightarrow{\Bbb R}^n$ be a
continuous mapping, and suppose that the curve $\widetilde{\gamma}
=g\circ\gamma$ is also locally rectifiable. Then there is a unique
non--decreasing function $L_{\gamma ,g}: \Delta
_{\gamma}\rightarrow\Delta _{\widetilde{\gamma}}$ such that
$L_{\gamma ,g}\left(l_{\gamma}(t)\right) =
l_{\widetilde{\gamma}}(t)$ for all $t\in\Delta.$ We say that a
mapping $f:D\rightarrow{\Bbb R}^n$ is {\it absolutely continuous on
paths with respect to $p$-modulus,} write $f\in ACP_p,$ if for
$p$-a.e. curve $\gamma:\Delta\rightarrow D$ the function $L_{\gamma
,f}$ is locally absolutely continuous on all closed intervals of
$\Delta.$

\medskip
A following result is a insignificant amplification of one classical
result for quasiregular mappings (see \cite[Theoreme~2.4,
Ch.~II]{Ri}).

\medskip
\begin{theorem}\label{th1}
{\sl Let a mapping $f:D\rightarrow {\Bbb R}^n$ be differentiable
a.e. in $D,$ have $N$- and $N^{-1}$-pro\-per\-ties, and
$ACP_p$-pro\-per\-ty for some $p\geqslant 1,$ too. Let $A$ in $D$ be
a Borel set, and let $\Gamma$ be a family of paths in $A.$ Assume
that $q:{\Bbb R}^n\rightarrow [0, \infty]$ is a Borel function
obeying $K_{O, p}(x,f)\leqslant q(f(x))$ for a.e. $x\in D.$ Then
relation
$$M_{p}(\Gamma)\le \int\limits_{{\Bbb R}^n}\rho^{\,\prime p}(y)N(y, f, A)q(y)dm(y)$$
holds for every $\rho^{\,\prime}\in {\rm adm\,}f(\Gamma).$}
\end{theorem}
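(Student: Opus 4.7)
The plan is to mimic Poletskii's argument: construct an admissible density for $\Gamma$ by pulling back $\rho^{\,\prime}$ through the derivative of $f,$ then convert the resulting $p$-integral to the image side by means of the area formula.

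First I would fix $\rho^{\,\prime}\in {\rm adm}\,f(\Gamma)$ and set
$$\rho(x):=\left\{\begin{array}{ll}\rho^{\,\prime}(f(x))\,\Vert f^{\,\prime}(x)\Vert, & x\in A \text{ and } f \text{ is differentiable at } x,\\ 0, & \text{otherwise.}\end{array}\right.$$
Passing to a Borel representative of $\rho^{\,\prime},$ we may assume $\rho$ is Borel measurable; here the $N^{-1}$-property is precisely what licenses modifying $\rho^{\,\prime}$ on a Lebesgue null set of ${\Bbb R}^n$ without altering $\rho$ on a set of positive measure in $D.$

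Next I would verify that $\rho$ is admissible for $\Gamma$ outside a family of $p$-modulus zero. The $ACP_p$-hypothesis guarantees that for $p$-a.e.\ locally rectifiable $\gamma:\Delta\to D$ the length function $L_{\gamma,f}$ is locally absolutely continuous. Combining this with a.e.\ differentiability of $f$ and the standard Fuglede-type removal of curves that spend positive arclength in the (Lebesgue-null) non-differentiability set of $f,$ one obtains the chain-rule bound $|(f\circ\gamma)^{\,\prime}(t)|\leqslant \Vert f^{\,\prime}(\gamma(t))\Vert\,|\gamma^{\,\prime}(t)|$ a.e., and hence for $p$-a.e.\ $\gamma\in\Gamma,$
$$\int\limits_\gamma \rho(x)\,|dx| \;=\; \int\limits_\Delta \rho^{\,\prime}(f(\gamma(t)))\,\Vert f^{\,\prime}(\gamma(t))\Vert\,|\gamma^{\,\prime}(t)|\,dt \;\geqslant\; \int\limits_{f\circ\gamma}\rho^{\,\prime}(y)\,|dy| \;\geqslant\; 1,$$
the last inequality because $\rho^{\,\prime}\in{\rm adm}\,f(\Gamma).$ Consequently $M_p(\Gamma)\leqslant \int\limits_D \rho^{\,p}(x)\,dm(x).$

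Finally, at every point of differentiability one has $\Vert f^{\,\prime}(x)\Vert^{p}\leqslant K_{O,p}(x,f)\,|J(x,f)|$ (trivial at points where $f^{\,\prime}(x)=0$ and by definition elsewhere), and by assumption $K_{O,p}(x,f)\leqslant q(f(x))$ for a.e.\ $x\in D.$ Therefore
$$\int\limits_D \rho^{\,p}(x)\,dm(x)\;\leqslant\;\int\limits_A \rho^{\,\prime p}(f(x))\,q(f(x))\,|J(x,f)|\,dm(x),$$
and the classical change-of-variables formula --- valid under a.e.\ differentiability together with the $N$-property --- rewrites the right-hand side as
$$\int\limits_{{\Bbb R}^n}\rho^{\,\prime p}(y)\,q(y)\,N(y,f,A)\,dm(y),$$
using the Borel measurability of $N(\cdot,f,A)$ recalled at the start of the section. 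The main obstacle is the admissibility step: one has to confirm that the exceptional family of curves where either the chain-rule identity or the absolute continuity of $L_{\gamma,f}$ fails has $p$-modulus zero. The first failure is handled by the standard Fuglede-type removal argument, while the second is encoded in the $ACP_p$-hypothesis; once these two negligible subfamilies are discarded, the above computation runs unchanged and yields the stated inequality.
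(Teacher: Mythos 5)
Your proposal is correct and follows essentially the same route as the paper: the same pullback density $\rho(x)=\rho^{\,\prime}(f(x))\Vert f^{\,\prime}(x)\Vert$ on $A,$ admissibility for $p$-a.e.\ curve via the $ACP_p$-property and a Fuglede-type exceptional family (the paper invokes \cite[Lemma~2.2, Ch.~II]{Ri} for the length inequality), the pointwise bound $\Vert f^{\,\prime}(x)\Vert^p\leqslant q(f(x))|J(x,f)|,$ and the change-of-variables formula with multiplicity under the $N$- and $N^{-1}$-properties. The only detail worth flagging is that at points where $J(x,f)=0$ but $f^{\,\prime}(x)\ne 0$ the bound is not literally ``by definition''; the paper disposes of this by noting that $J(x,f)\ne 0$ a.e.\ (a consequence of the $N^{-1}$-property, \cite[Proposition~8.3]{MRSY$_2$}).
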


\begin{proof}
Let $\rho^{\,\prime}\in {\rm adm\,}f(\Gamma).$ Set
$\rho(x)=\rho^{\,\prime}(f(x))\Vert f^{\,\prime}(x)\Vert$ for $x\in
A$ and $\rho(x)=0$ otherwise. Let $\Gamma_0$ be a family of all
locally rectifiable curves of $\Gamma,$ where $f$ is absolutely
continuous. Since $f\in ACP_p,$ we obtain that
$M_p(\Gamma)=M_p(\Gamma_0).$ Now, by \cite[Lemma~2.2, Ch.~II]{Ri},
$\int\limits_{\gamma} \rho(x) |dx|=\int\limits_{\gamma}
\rho^{\,\prime}(f(x))\Vert f^{\,\prime}(x)\Vert |dx|\ge
\int\limits_{f\circ\gamma} \rho^{\,\prime}(y) |dy| \ge 1,$ and.
consequently, $\rho\in {\rm adm\,}\Gamma_0.$ By change of variables
formula for differentiable mappings, which have $N$ and
$N^{\,-1}$-properties (see \cite[Proposition~8.3]{MRSY$_2$}),
$$M_{p}(\Gamma)=M_{p}(\Gamma_0)\le \int\limits_{{\Bbb R}^n}
\rho^p(x)\,dm(x)=\int\limits_{A} \frac{\rho^{\,\prime\,p}(f(x))\Vert
f^{\,\prime}(x)\Vert^p J(x, f)}{J(x ,f)}\, dm(x)\le$$
$$
\le\int\limits_{A} \rho^{\,\prime\,p}(f(x))q(f(x))|J(x, f)|\,
dm(x)=\int\limits_{{\Bbb R}^n}\rho^{\,\prime p}(y)N(y, f,
A)q(y)dm(y)\,.$$ Here we take into account, that $J(x, f)\ne 0$
a.e., see \cite[Proposition~8.3]{MRSY$_2$}. Theorem is proved.
\end{proof}$\Box$

\medskip
From theorem \ref{th1}, we obtain the following.

\medskip
\begin{corollary}\label{cor2}
{\sl Let a mapping $f:D\rightarrow {\Bbb R}^n$ be differentiable
a.e. in $D,$ have $N$- and $N^{-1}$-pro\-per\-ties, and $f\in
W_{loc}^{1, p}$ for some $p\geqslant 1,$ too. Let $A$ in $D$ be a
Borel set, and let $\Gamma$ be a family of paths in $A.$ Assume that
$q:{\Bbb R}^n\rightarrow [0, \infty]$ is a Borel function obeying
$K_{O, p}(x,f)\leqslant q(f(x))$ for a.e. $x\in D.$ Then relation
$$M_{p}(\Gamma)\le \int\limits_{{\Bbb R}^n}\rho^{\,\prime p}(y)N(y, f, A)q(y)dm(y)$$
holds for every $\rho^{\,\prime}\in {\rm adm\,}f(\Gamma).$}
\end{corollary}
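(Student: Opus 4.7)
The plan is to derive Corollary \ref{cor2} directly from Theorem \ref{th1}. Comparing the two statements, the hypotheses (a.e.\ differentiability, the $N$- and $N^{-1}$-properties, and the pointwise bound $K_{O,p}(x,f)\leqslant q(f(x))$) and the conclusion are identical; only the regularity condition differs, with Theorem \ref{th1} assuming $f\in ACP_p$ while Corollary \ref{cor2} assumes $f\in W_{loc}^{1,p}$. Thus the entire task reduces to verifying the implication $f\in W_{loc}^{1,p}\Longrightarrow f\in ACP_p$ for the continuous mapping $f$; once this is in hand, Theorem \ref{th1} gives the required modulus inequality verbatim.

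For the implication itself I would argue componentwise via a Fuglede-type theorem. Each coordinate $f_j$ of $f$ belongs to $W_{loc}^{1,p}(D,{\Bbb R})$ with weak gradient in $L_{loc}^p(D)$. Fuglede's theorem, in the $p$-modulus version (see e.g.\ \cite[Ch.~8]{MRSY$_2$} or \cite[Theorem~28.2]{Va$_1$}), supplies a family $\Gamma_j$ of $p$-modulus zero such that for every locally rectifiable $\gamma\notin\Gamma_j$ the continuous representative of $f_j$ is absolutely continuous along $\gamma$. Subadditivity of $M_p$ keeps $\bigcup_{j=1}^n \Gamma_j$ of $p$-modulus zero, so for $p$-a.e.\ curve $\gamma\subset D$ all $n$ coordinate functions $f_j\circ\gamma$ are simultaneously absolutely continuous. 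Consequently $\widetilde\gamma=f\circ\gamma$ is absolutely continuous, hence locally rectifiable, and its length function $l_{\widetilde\gamma}$ is absolutely continuous on compacta of $\Delta$. Because $L_{\gamma,f}\circ l_\gamma = l_{\widetilde\gamma}$ with $l_\gamma$ the continuous nondecreasing arc-length function of $\gamma$, a brief reparametrization transfers absolute continuity from $l_{\widetilde\gamma}$ on $\Delta$ to $L_{\gamma,f}$ on compact subintervals of $\Delta_\gamma$, which is exactly the definition of $ACP_p$.

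The one step I expect to require care is the final reparametrization: translating absolute continuity of the coordinate functions $f_j\circ\gamma$ on the original parameter into absolute continuity of $L_{\gamma,f}$ as a function on the arc-length parameter of $\gamma$. Rather than executing this reparametrization by hand, one can invoke the standard equivalence, for continuous mappings, between the Sobolev condition $W_{loc}^{1,p}$ under the present regularity hypotheses and the $ACP_p$-property, as packaged in \cite[Ch.~8]{MRSY$_2$}. With this equivalence cited, the hypotheses of Theorem \ref{th1} are met and the modulus inequality of Corollary \ref{cor2} follows immediately.
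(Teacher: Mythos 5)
Your proposal is correct and follows essentially the same route as the paper: reduce to Theorem \ref{th1} by showing $W_{loc}^{1,p}\subset ACP_p$ via Fuglede's theorem (the paper phrases this as $W_{loc}^{1,p}=ACL^p$ by Maz'ya plus $ACL^p\subset ACP_p$ by \cite[Theorem~28.2]{Va$_1$}). The componentwise detail and the reparametrization remark you add are fine elaborations of exactly the step the paper delegates to those references.
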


\begin{proof}
As known, $W_{loc}^{1, p}=ACL^p$ (see \cite[Theorems~1 and 2,
section~1.1.3]{Maz}). However, $ACL^p\subset ACP_p$ by Fuglede's
Lemma (see \cite[Theorem~28.2]{Va$_1$}). The rest follows from the
Theorem \ref{th1}.
\end{proof}$\Box$

\medskip
From theorems \ref{th3} and \ref{th1}, and from Corollaries
\ref{cor1} and \ref{cor3}, we obtain the following.

\medskip
\begin{corollary}\label{cor4}{\sl\,Let $p\in (n-1, n],$ and
let a mapping $f:D\rightarrow {\Bbb R}^n$ be differentiable a.e. in
$D,$ have $N$- and $N^{-1}$-pro\-per\-ties, and $ACP_p$-pro\-per\-ty
for some $p\geqslant 1,$ too. Let $A$ in $D$ be a Borel set, and let
$\Gamma$ be a family of paths in $A.$ Assume that $q:{\Bbb
R}^n\rightarrow [0, \infty]$ is a Borel function obeying $K_{O,
p}(x,f)\leqslant q(f(x))$ for a.e. $x\in D.$ Let $Q:{\Bbb
R}^n\rightarrow (0, \infty)$ be a Lebesgue measurable function
defined as follows: $Q(y)=N(y, f, D)\cdot\max\{q(y), 1\}$ at $y\in
f(D),$ and $Q(y)\equiv 1$ for $y\in {\Bbb R}^n\setminus f(D).$
Assume that the function $Q$ satisfies at least one of the following
conditions:

1) $Q\in FMO(y_0)$ for every $y_0\in f(D),$

2) $q_{y_0}(r)\,=\,O\left(\left[\log{\frac1r}\right]^{n-1}\right)$
as $r\rightarrow 0,$ for every $y_0\in f(D),$ where $q_{y_0}(r)$ is
defined by (\ref{eq32*}).

3) for every $y_0\in f(D)$ there exists $\delta(y_0)>0$ such that
for sufficiently small $\varepsilon>0$
$\int\limits_{\varepsilon}^{\delta(y_0)}\frac{dt}{t^{\frac{n-1}{p-1}}q_{y_0}^{\frac{1}{p-1}}(t)}<\infty$
and, besides that,
$\int\limits_{0}^{\delta(y_0)}\frac{dt}{t^{\frac{n-1}{p-1}}q_{y_0}^{\frac{1}{p-1}}(t)}=\infty.$

4)
$\int\limits_{\varepsilon}^{\delta_0}\frac{dt}{tq_{y_0}^{\frac{1}{n-1}}(t)}<\infty$
for every $y_0\in f(D),$ some $\delta_0=\delta_0(y_0)$ and small
enough $\varepsilon>0,$ and
$\int\limits_{0}^{\delta_0}\frac{dt}{tq_{y_0}^{\frac{1}{n-1}}(t)}=\infty$
for every $y_0\in f(D)$ and some $\delta_0=\delta_0(y_0).$

5) $p\in (n-1, n)$ and a function $Q$ satisfies $Q\in
L_{loc}^s({\Bbb R}^n)$ for some $s\geqslant\frac{n}{n-p}.$

Then $f$ is open and discrete.}
\end{corollary}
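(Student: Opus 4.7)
The plan is in two steps. First, I would deduce that the given mapping $f$ satisfies the modulus inequality (\ref{eq2*A}) with the $Q$ defined in the statement. Second, I would invoke Theorem \ref{th3}, Corollary \ref{cor1}, or Corollary \ref{cor3} according to which one of the hypotheses 1)--5) is in force.

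For the first step, I would fix $y_0\in f(D)$, numbers $0<r_1<r_2<\infty$, and a nonnegative Lebesgue measurable function $\eta:(r_1,r_2)\rightarrow[0,\infty]$ satisfying (\ref{eqA2}). Set $A=A(r_1,r_2,y_0)$ and $\Gamma=\Gamma(y_0,r_1,r_2)$, and define a Borel function on ${\Bbb R}^n$ by $\rho^{\,\prime}(y)=\eta(|y-y_0|)$ for $y\in A$ and $\rho^{\,\prime}(y)=0$ otherwise. By definition of $\Gamma(y_0,r_1,r_2)$, every path of $f(\Gamma)$ joins $S(y_0,r_1)$ and $S(y_0,r_2)$ inside $A$, so the same spherical-ring estimate from \cite[Theorem~5.7]{Va$_1$} that was used in the discussion following (\ref{eqA2}) gives $\int\limits_{\gamma^{\,\prime}}\rho^{\,\prime}(y)\,|dy|\geqslant \int\limits_{r_1}^{r_2}\eta(t)\,dt\geqslant 1$ for each locally rectifiable $\gamma^{\,\prime}\in f(\Gamma)$. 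Hence $\rho^{\,\prime}\in{\rm adm}\,f(\Gamma)$, and applying Theorem \ref{th1} with the Borel set taken to be $D$ yields
$$M_p(\Gamma)\leqslant\int\limits_{{\Bbb R}^n}\rho^{\,\prime\,p}(y)\,N(y,f,D)\,q(y)\,dm(y).$$
Since $\rho^{\,\prime}$ is supported in $A$, and since $N(y,f,D)\,q(y)\leqslant N(y,f,D)\,\max\{q(y),1\}=Q(y)$ for $y\in f(D)$ while the integrand vanishes outside $f(D)$, the right-hand side is dominated by $\int\limits_{f(D)}Q(y)\,\eta^{\,p}(|y-y_0|)\,dm(y)$, which is exactly (\ref{eq2*A}).

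For the second step, the conditions imposed on $Q$ match the hypotheses of the results already proved in the paper. In cases 1)--3) the function $Q$ satisfies exactly the hypothesis of Theorem \ref{th3}, in case 4) exactly the hypothesis of Corollary \ref{cor1}, and in case 5) exactly the hypothesis of Corollary \ref{cor3}. Invoking the relevant statement then gives openness and discreteness of $f$, provided $f$ is sense-preserving; the latter follows from the differentiability of $f$ a.e., the $N$- and $N^{-1}$-properties, and the assumption $K_{O,p}(x,f)\leqslant q(f(x))<\infty$ a.e., which together force $J(x,f)$ to keep a constant sign almost everywhere so that the topological index is positive on subdomains.

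I expect the one place that requires care is the admissibility check for $\rho^{\,\prime}$ in the first step: one has to use the very definition of $\Gamma(y_0,r_1,r_2)$ (paths whose $f$-images, not the paths themselves, lie in the spherical ring) and to apply the radial comparison from \cite[Theorem~5.7]{Va$_1$} to the image curves. After that, everything reduces to bookkeeping: Theorem \ref{th1} converts the distortion hypothesis into a modulus inequality, and the earlier Theorem \ref{th3} together with Corollaries \ref{cor1} and \ref{cor3} cover all five analytic conditions on $Q$ with no further work.
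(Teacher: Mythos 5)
Your two-step reduction is precisely the route the paper takes: for this corollary the paper offers no argument beyond the remark that it follows from Theorems \ref{th3} and \ref{th1} and Corollaries \ref{cor1} and \ref{cor3}, and your first step --- putting $\rho^{\,\prime}(y)=\eta(|y-y_0|)$ on the spherical ring, checking $\rho^{\,\prime}\in{\rm adm}\,f(\Gamma(y_0,r_1,r_2))$ via \cite[Theorem~5.7]{Va$_1$} applied to the image curves, and feeding $\rho^{\,\prime}$ into Theorem \ref{th1} with the Borel set equal to $D$ to obtain (\ref{eq2*A}) with $Q(y)=N(y,f,D)\cdot\max\{q(y),1\}$ --- is exactly the bookkeeping the paper leaves implicit. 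The dispatch of cases 1)--3) to Theorem \ref{th3}, case 4) to Corollary \ref{cor1} and case 5) to Corollary \ref{cor3} is also correct.

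The one genuine error is your closing claim that sense-preservation of $f$ follows from differentiability a.e., the $N$- and $N^{-1}$-properties, and the bound $K_{O,p}(x,f)\leqslant q(f(x))$. It does not, and the paper itself supplies the counterexample: the folding map which is the identity on $\{x_n\geqslant 0\}$ and sends $(x_1,\dots,x_n)\mapsto (x_1,\dots,-x_n)$ for $x_n<0$ is differentiable off a hyperplane, has the $N$- and $N^{-1}$-properties, preserves lengths (so $K_{O,p}\equiv 1$ and $Q\equiv 1$, which certainly satisfies conditions 1)--4)), yet its Jacobian changes sign and the map is not open. The inequality $K_{O,p}(x,f)\leqslant q(f(x))$ only controls the ratio $\Vert f^{\,\prime}(x)\Vert^{p}/|J(x,f)|$ and says nothing about the sign of $J(x,f)$; the remark following Corollary \ref{cor5} states explicitly that orientation preservation cannot be omitted from any of these statements. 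So sense-preservation must be taken as an additional hypothesis (its absence from the statement of Corollary \ref{cor4} is an oversight of the paper, since Theorem \ref{th3} and Corollaries \ref{cor1}, \ref{cor3} all require it); with that hypothesis restored, the rest of your argument goes through as written.
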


\medskip
\begin{corollary}\label{cor5}
In particular, the conclusion of Corollary \ref{cor4} holds whenever
$f\in W_{loc}^{1, p}$ instead of $f\in ACP_p.$
\end{corollary}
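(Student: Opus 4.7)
The plan is to reduce Corollary~\ref{cor5} directly to Corollary~\ref{cor4} by showing that the Sobolev hypothesis $f\in W^{1,p}_{loc}$ implies the $ACP_p$ property used in Corollary~\ref{cor4}. All the remaining assumptions (differentiability almost everywhere, the $N$- and $N^{-1}$-properties, the bound $K_{O,p}(x,f)\leqslant q(f(x))$ for a.e.\ $x\in D$, and the conditions 1)--5) on $Q$) transfer verbatim from the statement of Corollary~\ref{cor5} to that of Corollary~\ref{cor4}, so no additional verification is required on these parts.

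First I would recall, exactly as in the proof of Corollary~\ref{cor2}, the classical identification $W^{1,p}_{loc}=ACL^p$ of \cite[Theorems~1 and 2, section~1.1.3]{Maz}. Second, I would invoke Fuglede's lemma \cite[Theorem~28.2]{Va$_1$} to obtain the inclusion $ACL^p\subset ACP_p$. Chaining these two facts together shows that under the assumption $f\in W^{1,p}_{loc}$ the mapping $f$ automatically belongs to $ACP_p$ with the same exponent $p\in(n-1,n]$.

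With $f\in ACP_p$ now available, every hypothesis of Corollary~\ref{cor4} is satisfied, so its conclusion yields the openness and discreteness of $f$ under each of the five alternatives 1)--5) on $Q$. There is no genuine obstacle here: the entire content of Corollary~\ref{cor5} is the replacement of the \emph{a priori} weaker regularity assumption $ACP_p$ by the more familiar Sobolev regularity $W^{1,p}_{loc}$, and the bridge between the two was already built in the proof of Corollary~\ref{cor2}. The only point worth double-checking is that the exponent in $W^{1,p}_{loc}$ matches the exponent in $ACP_p$ used by Corollary~\ref{cor4}, which it does by construction.
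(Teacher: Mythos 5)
Your reduction is exactly the one the paper intends: the chain $W^{1,p}_{loc}=ACL^p$ (Maz'ya) followed by $ACL^p\subset ACP_p$ (Fuglede's lemma) is precisely the bridge already built in the proof of Corollary \ref{cor2}, and the paper states Corollary \ref{cor5} with this same implicit argument. Your proposal is correct and takes essentially the same approach.
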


\medskip Generally speaking, the condition of preservation of orientation of the mapping $f$ in all statements
presented above cannot be omitted. An example of a mapping $f$ with
finite distortion of length that does not preserve orientation and
is such that $M(f(\Gamma))=M(\Gamma),$ i.e., $Q\equiv 1,$ in
inequality $Q\equiv 1,$, but is neither discrete nor open is given
in \cite[section~8.10, Ch.~8]{MRSY$_2$}.

\medskip
We also give another example. Let $x=(x_1,\ldots, x_n).$ We define
$f$ as the identical mapping in the closed domain $\{x_n\geqslant
0\}$ and set $f(x)=(x_1,\ldots,-x_n)$ for $x_n<0.$ Note that $f$ is
a mapping $f$ preserves the lengths of curves. Therefore, $f$
satisfies inequality (\ref{eq2*A}) for $Q\equiv 1.$ This mapping is
discrete but not open. For example, under the mapping $f$ the ball
${\Bbb B}^n$ is mapped into the semisphere $\{y=(y_1,\ldots, y_n)\in
{\Bbb R}^n: |y|<1, y_i\geqslant 0\},$ which is not an open set in
${\Bbb R}^n.$

\medskip
\begin{remark}\label{rem4}
Results obtained in the paper can be applied to various classes of
plane and space mappings (see \cite{MRSY$_2$} and \cite{GRSY}).

We mainly deal with a case $p\in (n-1, n].$ Unfortunately, we can
not give any conclusion about discreteness and openness of mappings
satisfying same modulus relations at arbitrary $p\geqslant 1.$
\end{remark}


{\bf \noindent Evgeny Sevost'yanov} \\
Zhitomir State University,  \\
40 Bol'shaya Berdichevskaya Str., 10 008  Zhitomir, UKRAINE \\
Phone: +38 -- (066) -- 959 50 34, \\
Email: esevostyanov2009@mail.ru

\begin{thebibliography}{99}
{\small
\bibitem[AS]{AS} T.~Adamowicz and N.~Shanmugalingam,
$"$Non-conformal Loewner type estimates for modulus of curve
families$"$, {\it Ann. Acad. Sci. Fenn. Math.}, \textbf{35} (2010),
609–-626.


\bibitem[BGMV]{BGMV} C.J.~Bishop, V.Ya.~Gutlyanskii, O.~Martio, M.~Vuorinen, $"$On conformal dilatation in
space$"$, {\it Intern. J. Math. and Math. Scie.}, \textbf{22}
(2003), 1397--1420.


\bibitem[BGR]{BGR} B.V.~Bojarski, V.Ya.~Gutlyanski\u{i},
V.I.~Ryazanov,  $"$On the Beltrami equations with two
characteristics$"$, {\it Complex Variables and Elliptic Equations},
\textbf{54}:10 (2009), 935--950.

\bibitem[Cr$_1$]{Cr$_1$} M.~Cristea, $"$Local homeomorphisms having local $ACL^n$ inverses$"$, {\it
Complex Variables and Elliptic Equations}, \textbf{53}:1 (2008),
77--99.

\bibitem[Cr$_2$]{Cr$_2$} M.~Cristea,  $"$Open discrete mappings having local $ACL^n$
inverses$"$, {\it Complex Variables and Elliptic Equations},
\textbf{55}:1--3 (2010), 61--90.

\bibitem[Fe]{Fe} H.~Federer, {\it Geometric Measure Theory}, Springer: Berlin etc.,
1969.

\bibitem[Gol$_1$]{Gol$_1$} A.~Golberg,  $"$Homeomorphisms with finite mean
dilatations$"$, {\it Contemporary Math.}, \textbf{382} (2005),
177--186.

\bibitem[Gol$_2$]{Gol$_2$} A.~Golberg, $"$Differential properties of
$(\alpha, Q)$--ho\-me\-o\-mor\-phisms$"$, {\it Further Progress in
Analysis, World Scientific Publ.}, (2009), 218--228.

\bibitem[GRSY]{GRSY} V.~Ya.~Gutlyanskii, V.~I.~Ryazanov, U.~Srebro,
E.~Yakubov, {\it The Beltrami Equation: A Geometric Approach.}
Developments in Mathematics, vol. 26., New York etc.: Springer,
2012.

\bibitem[He]{He} J.~Heinonen, {\it Lectures on Analysis on metric
spaces}, New York: Springer Science+Business Media, 2001.

\bibitem[HW]{HW} W.~Hurewicz and H.~Wallman, {\it Dimension Theory}, Princeton:
Princeton Univ. Press, 1948.

\bibitem[IR]{IR} A.~Ignat'ev and V.~Ryazanov, $"$Finite mean oscillation
in the mapping theory$"$, {\it Ukr. Math. Bull.}, \textbf{2}:3
(2005), 403--424.

\bibitem[IM]{IM}  T.~Iwaniec and G.~Martin, {\it Geometrical Function
Theory and Non--Linear Analysis}, Oxford: Claren\-don Press, 2001.


\bibitem[KO]{KO} P.~Koskela and J.~Onninen, $"$Mappings of finite
distortion: Capacity and modulus inequalities$"$, {\it J. Reine
Angew. Math.}, \textbf{599} (2006), 1--26.


\bibitem[MRSY$_1$]{MRSY$_1$} O.~Martio, V.~Ryazanov, U.~Srebro, E.~Yakubov,
$"$Mappings with finite length distortion$"$, {\it J. Anal. Math.},
\textbf{93} (2004), 215--236.

%
\bibitem[MRSY$_2$]{MRSY$_2$} O.~Martio, V.~Ryazanov, U.~Srebro, E.~Yakubov,
{\it Moduli in Modern Mapping Theory}, New York: Springer Science +
Business Media, LLC, 2009.

\bibitem[MRV$_1$]{MRV$_1$} O.~Martio, S.~Rickman,
J.~V\"{a}is\"{a}l\"{a}, $"$Definitions for quasiregular mappings$"$,
{\it Ann. Acad. Sci. Fenn. Ser. A I. Math.}, \textbf{448} (1969),
1--40.

\bibitem[Maz]{Maz} V.~Maz'ya, {\it Sobolev Spaces}, Berlin: Springer-Verlag, 1985.

\bibitem[Mikl]{Mikl} V.M.~Miklyukov, $"$The relation distance of M.A. Lavrent'ev and prime ends on non--parametric
surface$"$, {\it Ukr. Math. Bull.}, \textbf{1}:3 (2004), 353--376.

\bibitem[Na]{Na} R.~N\"{a}kki, $"$Boundary behavior pf quasiconformal mappings
in $n$-space$"$, {\it Ann. Acad. Sci. Fenn. Ser. A1. Math.},
\textbf{484} (1970), 1--50.

\bibitem[Pol]{Pol} E.A.~Poletskii, $"$The modulus method for
non--homeomorphic quasiconformal mappings$"$, {\it Mat. Sb.},
\textbf{83}:2 (1970), 261--272 (in Russian).


\bibitem[RR]{RR} T.~Rado and P.V.~Reichelderfer, {\it Continuous Transformations in
Analysis}, Berlin etc.: Springer--Verlag, 1955.

\bibitem[Re$_1$]{Re$_1$} Yu.G.~Reshetnyak, $"$Generalized derivatives and differentiability
a.e.$"$, {\it Math. Sb.}, \textbf{75}:3  (1968), 323--334 (in
Russian).

\bibitem[Re$_2$]{Re$_2$} Yu.G.~Reshetnyak, {\it Space mappings with bounded distortion},
English transl.: Amer. Math. Soc., Providence, RI, 1989.

\bibitem[Ri]{Ri} S.~Rickman, {\it Quasiregular mappings},
Berlin etc.: Springer-Verlag, 1993.

\bibitem[RSY$_1$]{RSY$_1$} V.~Ryazanov, U.~Srebro and E.~Yakubov, $"$On strong solutions of the Beltrami
equations$"$, {\it Complex Variables and Elliptic Equations},
\textbf{55}:1--3 (2010), 219--236.

\bibitem[RSY$_2$]{RSY$_2$} V.~Ryazanov, U.~Srebro and E.~Yakubov, $"$On integral conditions in
the mapping theory$"$, {\it Ukr. Math. Bull}, \textbf{7}:1 (2010),
73--87.

\bibitem[Sa]{Sa} S.~Saks, {\it Theory of the Integral}, New York: Dover Publ.
Inc., 1964.

\bibitem[Sev$_1$]{Sev$_1$} E.A.~Sevost'yanov, $"$On the openess and discreteness of
mappings with unbounded characteristic of quasiconormality$"$, {\it
Ukrainian Mathematical Journal,} \textbf{63}:8 (2012), 1298--1305.

\bibitem[TY]{TY} C.J.~Titus and G.S.~Young, $"$The extension of
interiority with some applications$"$, {\it Trans. Amer. Math.
Soc.,} \textbf{103} (1962),  329--340.



\bibitem[UV]{UV} A.D.~Ukhlov and S.K.~Vodop'yanov, $"$Sobolev spaces and mappings with bounded $(P; Q)$ --
distortion on Carnot groups$"$, {\it Bull. Sci. Mat.}, \textbf{54}:4
(2009), 349--370.

\bibitem[Va$_1$]{Va$_1$} J.~V\"{a}is\"{a}l\"{a}, {\it Lectures on $n$ --
Dimensional Quasiconformal Mappings}, Lecture Notes in Math.,
\textbf{229}, Berlin etc.: Springer--Verlag, 1971.

\bibitem[Va$_2$]{Va$_2$} J.~V\"{a}is\"{a}l\"{a}, $"$Modulus and capacity
inequalities for quasiregular mappings$"$, {\it Ann. Acad. Sci.
Fenn. Ser. A 1 Math.}, \textbf{509} (1972), 1--14.}






\end{thebibliography}
\end{document}